\newtheorem{theorem}{Theorem}[section]
\newtheorem{prop}[theorem]{Proposition}
\newtheorem{lemma}[theorem]{Lemma}
\newtheorem{remark}[theorem]{Remark}
\newtheorem{corollary}[theorem]{Corollary}
\newcommand{\bR}{{\mathbb R}}
\newcommand{\sA}{{\mathscr A}}
\newcommand{\sF}{{\mathscr F}}
\newcommand{\sL}{{\mathscr L}}
\newcommand{\ba}{\begin{eqnarray}}
\newcommand{\na}{\end{eqnarray}}
\newcommand{\ban}{\begin{eqnarray*}}
\newcommand{\nan}{\end{eqnarray*}}
\newcommand{\suml}{\sum\limits}
\newcommand{\prodl}{\prod\limits}
\begin{document}

\title{On duality in symplectic cohomologies}

\author[Hua-Zhong Ke]{Hua-Zhong Ke}
\maketitle

\begin{center}
\emph{School of Mathematics, Sun Yat-sen University, Guangzhou,  510275, China}
\emph{kehuazh@mail.sysu.edu.cn}
\end{center}

{\bf Abstract:} For a symplectic manifold $(M^{2n},\omega)$ without boundary (not necessarily compact), we prove Poincar\'e type duality in filtered cohomology rings of differential forms on $M$, and we use this result to obtain duality between $(d+d^\Lambda)$- and $dd^\Lambda$-cohomologies.

{\bf Keywords:} Symplectic manifolds; Filtered cohomology; Lefschetz map.

{\bf MR(2010) Subject Classification:} 53D35

\date{\today}

\tableofcontents

\section{Introduction}

A symplectic manifold is a smooth manifold $M^{2n}$ equipped with a non-degenerate closed $2$-form $\omega$, i.e. $\omega^n$ is nowhere-vanishing and $d\omega=0$. By Darboux's theorem, $(M^{2n},\omega)$ is locally symplectomorphic to an open subset of $\bR^{2n}$ which is equipped with the standard symplectic form. So, in contrast to Riemannian geometry, symplectic invariants are necessarily global. The study of symplectic invariants has been very active in both mathematics and physics in the last few decades.

The well-known de Rham cohomology provides some of the most basic global topological invariants from differential forms. It is natural to ask whether one can obtain symplectic invariants from differential forms. On a closed Riemannian manifold, the celebrated Hodge decomposition theorem implies that each cohomology class admits a unique harmonic representative. This inspired Brylinski \cite{B} to conjecture that if $M$ is closed, then each cohomology class of $M$ admits a symplectic harmonic representative. Mathieu \cite{M} and Yan \cite{Y} showed that this conjecture holds on $M$ if and only if $M$ satisfies the strong Lefschetz property. Tseng and Yau initiated a program to search for new symplectic cohomologies of differential forms satisfying Hodge type decomposition. They first introduced $(d+d^\Lambda)$-, $dd^\Lambda$-, $\partial_+$- and $\partial_-$-cohomologies \cite{TY1,TY2}, and they proved that these cohomologies are naturally Lefschetz decomposable and hence are determined by their corresponding primitive cohomologies. Then Tsai, Tseng and Yau generalized these primitive cohomologies to filtered cohomologies \cite{TTY}. They found that filtered cohomologies give a two-sided resolution of Lefschetz maps. Moreover, they discovered a non-associative product operation on filtered forms, which generates an $A_3$-algebra structure on forms that underlies the filtered cohomologies and gives them a ring structure. Filtered cohomology rings contain both topological and symplectic information of $M$, and it is important to study these new symplectic invariants.

One of the most important properties of de Rham theory is the Poincar\'e duality, which states that the natural pairing $H^*(M)\times H^*_c(M)\rightarrow\bR$ gives $H^k(M)\cong(H_c^{2n-k}(M))^\vee$ for any $k$. Here the subscript "c" denotes cohomology of differential forms with compact supports. A natural question is whether filtered cohomology rings have such duality property. The main result of this note is the following.
\begin{prop}($=$Proposition \ref{mainprop})
The natural pairing $F^pH^*(M)\times F^pH_c^*(M)\rightarrow\bR$ gives 
\ban
F^pH^k(M)\cong(F^pH_c^{2n+2p-k}(M))^\vee,\forall k.
\nan
\end{prop}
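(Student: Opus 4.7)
The plan is to deduce the claimed duality from the classical de Rham Poincar\'e duality by fitting both sides into long exact sequences that involve only ordinary de Rham cohomology and Lefschetz powers, and then applying the five lemma. I would first recall from \cite{TTY} the long exact sequence which realizes $F^pH^*(M)$ as a two-sided resolution of the Lefschetz map $L^{p+1}$ on de Rham cohomology, schematically of the form
\begin{equation*}
\cdots\to H^{k-2p-2}(M)\xrightarrow{L^{p+1}} H^k(M)\to F^pH^k(M)\to H^{k-2p-1}(M)\xrightarrow{L^{p+1}} H^{k+1}(M)\to\cdots,
\end{equation*}
together with the parallel sequence obtained by working with compactly supported forms throughout. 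Both sequences persist without any compactness assumption on $M$, since they arise at the cochain level from short exact sequences of the underlying complexes of forms, and no integration is used in setting them up.

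Next I would verify that the natural pairing descends to cohomology in the non-compact/compactly-supported setting. Away from the distinguished ``middle'' degree $k=n+p$ the filtered differential coincides with the exterior derivative $d$, so well-definedness is a direct application of Stokes' theorem, the compact support of the second factor controlling all boundary terms. At $k=n+p$, where the filtered complex is glued using $dd^\Lambda$, well-definedness follows from the symplectic adjointness between $d$ and $d^\Lambda$ together with integration by parts, again with boundary terms killed by compact support on one side.

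I would then dualize the compactly supported long exact sequence via the classical Poincar\'e duality isomorphism $H^j(M)\cong(H_c^{2n-j}(M))^\vee$, using the fact that the transpose of $L^{p+1}$ under this pairing is again $L^{p+1}$ (since $L$ is wedge multiplication by the closed form $\omega$, for which $\int_M L\alpha\wedge\beta=\int_M\alpha\wedge L\beta$). The natural pairing then induces a morphism from the long exact sequence for $F^pH^k(M)$ into the dualized sequence involving $(F^pH_c^{2n+2p-k}(M))^\vee$; four out of every five vertical arrows in this ladder are the classical Poincar\'e duality isomorphisms for $M$, after accounting for the degree shifts $k\mapsto 2n-k$ and $k-2p-1\mapsto 2n-k+2p+1=2n+2p-k-1+2$ etc. The five lemma then delivers the isomorphism at the remaining position, which is exactly the desired statement.

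The principal technical difficulty I anticipate is the commutativity of this ladder diagram: namely, that the cochain-level pairing of filtered forms intertwines the connecting homomorphisms of both long exact sequences and is compatible, with the correct signs, with the transpose identification of the Lefschetz powers under Poincar\'e duality. This will be a diagram chase based on the explicit cochain-level description of the connecting maps in the TTY resolution of $L^{p+1}$, and is the main place where care is required; once it is in place, the five lemma finishes the argument mechanically.
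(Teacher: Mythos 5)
Your architecture is genuinely different from the paper's: the paper proves the finite-type case by a dimension count extracted from the TTY short exact sequences together with an explicit verification that the pairing $g_p^M$ is non-degenerate, and then globalizes by a Mayer--Vietoris/good-cover induction (Sections \ref{finitetypecase} and \ref{generalcase}); you instead propose to map the TTY resolution of $L^{p+1}$ for $F^pH^*(M)$ into the dual of the compactly supported one and invoke the five lemma once, with classical Poincar\'e duality supplying four of every five vertical arrows. If it works, this buys a proof with no local-to-global step --- but only if you use de~Rham duality $H^k(M)\cong(H_c^{2n-k}(M))^\vee$ for \emph{arbitrary} manifolds: the version in \cite{BT} assumes finite type, so you must either cite the general statement (e.g.\ \cite{GHV}) or reinstate exactly the Mayer--Vietoris machinery you hoped to avoid. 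Two further imprecisions at the setup stage: the two-sided resolution in \cite{TTY} is really two long exact sequences (one for the range $k\le n+p$ with maps $\Pi^p$ and $L^{-p-1}d$, one for the complementary range with $*_rdL^{-p-1}$ and $*_r$), with a second-order differential at the middle degree, so your single schematic sequence needs to be split and matched degree by degree against the dualized compact sequence; and the ``natural pairing'' of the statement is not the naive wedge-and-integrate pairing but the one through the product $\times$ and $\theta_p$, which in complementary degrees involves $*_r$, so even the well-definedness discussion should be phrased through the $F^pH^*$-module structure of $F^pH_c^*$ and the identity \eqref{stokes} rather than through ``the filtered differential coincides with $d$''.

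The genuine gap is the step you explicitly defer: commutativity of the two squares of the ladder that touch the filtered term, i.e.\ compatibility of the pairing with $\Pi^p$ and $L^{-p-1}d$ on the ordinary side and with $(*_r)^\vee$ and $(*_rdL^{-p-1})^\vee$ on the dualized compact side. This is not a mechanical diagram chase to be filled in later; it is the mathematical core of the result. Establishing it requires knowing exactly when $\int_M L^rB_s\cdot L^{r'}B_{s'}$ can be non-zero for primitive $B_s,B_{s'}$ --- this is the paper's Proposition \ref{productlemma} and Corollary \ref{pairinglemma} --- and the identities that make the squares commute are precisely the paper's Case (i) computation (using $L^{p+1}*_r\bar A_k=0$ together with $\mathbbm 1=\Pi^p+L^{p+1}L^{-p-1}$ to replace $\Pi^pA_k'$ by $A_k'$ under the integral) and Case (ii) computation (comparing $\int_M L^{-p-1}dA_k\cdot A'$ with $\int_M dA_k\cdot L^{-p-1}A'$ term by term in the Lefschetz decomposition, plus Stokes). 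Until those identities are proved, the five lemma cannot be invoked and the proposal is not yet a proof. With them in hand, your route does go through and would even be somewhat cleaner than the paper's two-stage argument, but the work it requires is essentially the same work the paper carries out in Section \ref{finitetypecase}, relocated into the verification of your ladder.
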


When $M$ is closed, the above result was essentially proved in \cite{TY1,TY2} in the case $p=0$, by using harmonic representatives of these cohomologies. We take a different approach to deal with this problem. We first show that the duality property holds when $M$ is of finite type, by using the two-sided resolution of Lefschetz maps given by filtered cohomologies. Then we use the Mayer-Vietoris argument to prove the result in the general case, which is similar to the classical local-to-global proof of Poincar\'e duality in de Rham theory.

Note that primitive $(d+d^\Lambda)$- and $dd^\Lambda$-cohomologies are special cases of filtered cohomologies. So we can use the above proposition to prove the following duality between $(d+d^\Lambda)$- and $dd^\Lambda$-cohomologies, which generalizes Proposition 3.26 in \cite{TY1}. 
\begin{prop}($=$Proposition \ref{2ndmainprop}$+$Remark \ref{2ndmainrmk})
The natural pairing $H^*_{d+d^\Lambda}(M)\times H^*_{dd^\Lambda,c}(M)\rightarrow\bR$ gives
\ban
H^k_{d+d^\Lambda}(M)\cong(H^{2n-k}_{dd^\Lambda,c}(M))^\vee,\forall k,
\nan
and the natural pairing $H^*_{dd^\Lambda}(M)\times H^*_{d+d^\Lambda,c}(M)\rightarrow\bR$ gives
\ban
H^k_{dd^\Lambda}(M)\cong(H^{2n-k}_{d+d^\Lambda,c}(M))^\vee,\forall k.
\nan
\end{prop}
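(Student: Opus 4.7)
The plan is to deduce both isomorphisms from Proposition~\ref{mainprop}, exploiting the two observations already recorded in the introduction: that the primitive $(d+d^\Lambda)$- and $dd^\Lambda$-cohomologies of \cite{TY1,TY2} are instances of the filtered cohomologies of \cite{TTY}, and that the full $(d+d^\Lambda)$- and $dd^\Lambda$-cohomologies reduce to their primitive parts via the Lefschetz decomposition.

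First I would read off from \cite{TTY} the precise identifications realizing the primitive pieces of $H^k_{d+d^\Lambda}(M)$ and $H^k_{dd^\Lambda}(M)$ (as well as their compactly supported versions) as $F^pH^*(M)$ and $F^pH^*_c(M)$ for explicit values of $p$ and target degree; the shift $k\mapsto 2n-k$ in the statement, compared with $k\mapsto 2n+2p-k$ in Proposition~\ref{mainprop}, pins down what this $p$ must be in each case. Plugging these identifications into Proposition~\ref{mainprop} then produces non-degenerate pairings between primitive $(d+d^\Lambda)$-classes and primitive $dd^\Lambda$-classes of complementary degree, i.e.\ the primitive version of the statement.

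Next I would lift from primitive to full cohomologies via the Lefschetz decomposition
\[ H^k_{d+d^\Lambda}(M)=\bigoplus_{r}L^rPH^{k-2r}_{d+d^\Lambda}(M), \]
and similarly for $dd^\Lambda$ and for compactly supported classes. The integration pairing between $L^r\alpha$ and $L^s\beta$, with $\alpha,\beta$ primitive, vanishes unless the Lefschetz weights are complementary, as a standard consequence of the $\mathfrak{sl}_2$-module structure on $\Omega^*(M)$ generated by $L,\Lambda$ and the degree operator. So the pairing between the full cohomologies becomes block-diagonal after collecting matched Lefschetz powers, each block being a primitive pairing of the kind handled in the previous step. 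Non-degeneracy of the blocks then yields non-degeneracy of the full pairing.

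The main obstacle will be the bookkeeping in the middle step: verifying that the abstract pairing in Proposition~\ref{mainprop} agrees, under the identifications of \cite{TTY}, with the natural integration pairing $([\alpha],[\beta])\mapsto\int_M\alpha\wedge\beta$ appearing in the current statement, and that cross terms $L^r\alpha\wedge L^s\beta$ with non-complementary Lefschetz indices integrate to zero so that the Lefschetz decomposition really is orthogonal for this pairing. Both points are controlled by explicit formulas for $L$ and $\Lambda$ on primitive forms. Once these compatibilities are in place, the first isomorphism follows, and the second (Remark~\ref{2ndmainrmk}) is obtained by exactly the same argument after swapping the roles of $d+d^\Lambda$ and $dd^\Lambda$.
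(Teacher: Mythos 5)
Your plan is essentially the paper's own proof: the paper likewise decomposes the integration pairing into Lefschetz blocks using Proposition \ref{productlemma} and Corollary \ref{pairinglemma}, identifies each primitive block via $L^{n-k+2r}PH^{k-2r}_{d+d^\Lambda}\cong F^{n-k+2r}H^{2n-k+2r+1}$ and $L^{n-k+2r}PH^{k-2r}_{dd^\Lambda,c}\cong F^{n-k+2r}H_c^{2n-k+2r}$, and then checks a commutative diagram matching the natural pairing with $\Phi^M_{n-k+2r}$ from Proposition \ref{mainprop}, with the second duality obtained by interchanging the roles of $d+d^\Lambda$ and $dd^\Lambda$. The only point to be careful about is that, since $M$ need not be compact, you should conclude from the block-by-block identification with the isomorphism $\Phi^M_{n-k+2r}$ that each block map is an isomorphism onto the corresponding dual (as the paper's commutative diagram does), rather than merely that the pairing is non-degenerate; your compatibility step supplies exactly this once carried out.
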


The rest of this note is arranged as follows. In Section 2, we recall some basic materials on filtered cohomology. In Section 3, we state the main result of this note, and discuss some immediate corollaries. In Section 4, we prove the main result in the finite type case, and in Section 5, we deal with the general case. In Section 6, we prove the duality between $(d+d^\Lambda)$- and $dd^\Lambda$-cohomologies.

\section{Preliminaries on filtered cohomology}

In this note, we always assume that $(M^{2n},\omega)$ is a symplectic manifold without boundary. It is well-known that the symplectic structure endows $\Omega^*$ with an $\mathfrak{sl}_2$ representation, where $\Omega^*$ is the space of differential forms on $M$. More precisely, let $L$ be the operation of wedging a form with $\omega$, let $\Lambda$ be the operation of contracting a form with the associated Poisson bivector field, and let $H$ be the operation of counting the degree of a homogeneous form:
\ban
H(A)=(n-k)A,\quad\forall A\in\Omega^k.
\nan
Then we have
\ban
[H,\Lambda]=2\Lambda,\quad[H,L]=-2L,\quad[\Lambda,L]=H.
\nan
In particular, $L$ is called the Lefschetz map.

We can decompose $\Omega^*$ into a direct sum of infinitely many irreducible finite-dimensional $\mathfrak{sl}_2$ representations. The highest weight states of these irreducible $\mathfrak{sl}_2$ representations are called primitive forms. In other words, a form $B_k\in\Omega^k$ with $k\leq n$ is called primitive if and only if it satisfies the following two equivalent conditions: (i) $\Lambda B_k=0$; (ii) $L^{n-k+1}B_k=0$. As a consequence of the above-mentioned $\mathfrak{sl}_2$ action on $\Omega^*$, we have the famous Lefschetz decomposition for forms on $M$, i.e. for each $A_k\in\Omega^k$ we can write uniquely
\ban
A_k=\suml_{\max\{0,k-n\}\leq l\leq\frac k2}L^lB_{k-2l},
\nan
where each $B_{k-2l}$ is primitive. We refer readers to Section 2.1 in \cite{TY2} for the explicit formula expressing $B_{k-2l}$ in terms of $A_k$. Let
\ban
\sL^{r,s}:=\{A_{2r+s}\in\Omega^{2r+s}|\textrm{ the Lefschetz decomposition for }A_{2r+s}\textrm{ is }A_{2r+s}=L^rB_s\}.
\nan
Then $\sL^{0,s}$ is precisely the space of primitive $s$-forms, and the Lefschetz decomposition gives
\ban
d=\partial_++L\partial_-,
\nan
where $\partial_\pm:\sL^{r,s}\rightarrow\sL^{r,s\pm1}$ are linear first order differential operators.

%\ban
%\Omega^k=\bigoplus\limits_{\max\{0,k-n\}\leq l\leq\frac k2}\sL^{l,k-2l}.
%\nan
% Note that $\sL^{0,s}$ is precisely the space of primitive $s$-forms.

We recall three linear operators on $\Omega^*$ directly related to the Lefschetz decomposition \cite{TTY}, which will be used in this note. The first is the  reflection operator $*_r$ given by 
\ban
*_r(L^rB_s)=L^{n-r-s}B_s.
\nan
It is clear that $*_r$ is a projection operator, i.e. $*_r^2=\mathbbm 1$. Secondly, for each integer $p>0$, we define a linear operator $L^{-p}$ as follows. For any $A_k\in\Omega^k$, $L^{-p}A_k\in\Omega^{k-2p}$ comes from the Lefschetz decomposition for $A_k$:
\ban
A_k=\suml_{\max\{0,k-n\}\leq l\leq\frac k2}L^lB_{k-2l}\quad\Longrightarrow\quad L^{-p}A_k=\suml_{\max\{0,k-2p-n\}\leq l\leq\frac k2-p}L^lB_{k-2p-2l}.
\nan
One can check that for arbitrary $k$, we have $*_rA_k=L^{n-k}A_k$. Thirdly, for each integer $p\geq0$, we define a linear operator $\Pi^p$ as follows. For any $A_k\in\Omega^k$, $\Pi^{p}A_k\in\Omega^k$ comes from the Lefschetz decomposition for $A_k$:
\ban
A_k=\suml_{\max\{0,k-n\}\leq l\leq\frac k2}L^lB_{k-2l}\quad\Longrightarrow\quad \Pi^{p}A_k=\suml_{\max\{0,k-n\}\leq l\leq\min\{\frac k2, p\}}L^lB_{k-2l}.
\nan
One can check that we have the following relation of operators:
\ba\label{projLefschetz}
\mathbbm 1=\Pi^p+L^{p+1}L^{-p-1}.
\na

The space of $p$-filtered forms, denoted by $F^p\Omega^*$, is simply the image space of $\Pi^p$. The notion of filtered forms was first introduced in \cite{TTY} as a generalization of that of primitive forms. In other words, a form $A_k\in\Omega^k$ with $k\leq n+p$ is called $p$-filtered, if and only if it satisfies the following two equivalent conditions: (i)$\Lambda^{p+1}A_k=0$; (ii)$L^{n-k+1+p}A_k=0$. So a $0$-filtered form is nothing but a primitive form.

Inspired by previous results of Tseng and Yau \cite{TY1,TY2} on primitive cohomologies, for each integer $p$ with $0\leq p\leq n$, Tsai, Tseng and Yau \cite{TTY} introduced $p$-filtered cohomology $F^pH^*$, which is the cohomology of an elliptic complex $(\sF_p^k,d_k)$, where
\ban
\sF_p^k=\left\{\begin{array}{cc}
F^p\Omega^k,&0\leq k\leq n+p,\\
F^p\Omega^{\bar k},&n+p<k\leq 2n+2p+1,
\end{array}\right.
\nan
with $\bar k:=2n+2p+1-k$, and $d_k$'s are linear first or second order differential operators. Moreover, they discovered a novel graded commutative, non-associative product operation $\times$ on $\sF_p^*$, which generates an $A_3$-algebra structure on $\sF_p^*$. In particular, $(F^pH^*,\times)$ is a graded commutative algebra with identity. We refer readers to \cite{TTY} for details of the $A_3$-algebra structure.

Let $\Omega_c^*$ be the space of differential forms on $M$ with compact supports. Then $\Omega_c^*$ is an $\mathfrak{sl}_2$ sub-representation of $\Omega^*$. In particular, Lefschetz decomposition holds within $\Omega_c^*$. So we can consider compact filtered forms and compact filtered cohomology, and we will use a subscript "c" to denote the compact version. In particular, we point out that the product structure $\times$ on $\sF_p^*$ induces $F^pH^*_c$ with an $F^pH^*$-module structure.

%constructed linear differential operators $d_\pm:F^p\Omega^*\rightarrow F^p\Omega^{*\pm1}$, which reduces to $\partial_\pm$ when $p=0$. Using these operators,  they introduced an $A_3$-algebra structure on $\sF_p^*$, where
%\ban
%\sF_p^k=\left\{\begin{array}{cc}
%F^p\Omega^k,&0\leq k\leq n+p,\\
%F^p\Omega^{\bar k},&n+p<k\leq 2n+2p+1.
%\end{array}\right.
%\nan
%Here $\bar k:=2n+2p+1-k$. The differential $\sF_p^k\xrightarrow{d_k}\sF_p^{k+1}$ is given by
%\ban
%d_k=\left\{\begin{array}{cc}
%d_+,&0\leq k<n+p,\\
%-\partial_+\partial_-,&k=n+p,\\
%-d_-,&n+p<k\leq 2n+2p,
%\end{array}\right.
%\nan
%which gives the filtered cohomology $F^pH^*$, and for $0\leq k\leq n+p$, we set
%\ban
%F^pH_+^k:=F^pH^k,\quad F^pH_-^k:=F^pH^{\bar k}.
%\nan
%In particular, $(F^pH^*,\times)$ is a graded commutative algebra with identity. 

\section{Duality in filtered cohomology}

We first define a linear function $\theta_p^M$ on $\sF_{c,p}^*$ by setting
\ban
\theta_p^M|_{\sF_{c,p}^k}=0,\quad0\leq k<2n+2p+1,
\nan
and
\ban
\theta_p^M(\sA_{2n+2p+1})=\int_M*_r\sA_{2n+2p+1},\quad\forall\sA_{2n+2p+1}\in\sF_{c,p}^{2n+2p+1}=\Omega_c^0.
\nan
One can check that 
\ba\label{stokes}
\theta_p^M(d_{2n+2p}\sA_{2n+2p})=0,\quad\forall\sA_{2n+2p}\in\sF_{c,p}^{2n+2p}.
\na
As a consequence, this linear function induces a linear function on $F^pH_c^{*}$, which we also denote by $\theta_p^M$. Note that $F^pH_c^*$ is an $F^pH^*$-module via the product structure $\times$ on $\sF_p^*$, and we define a bilinear pairing $g_p^M$ by the composition
\ban
F^pH^*\times F^pH_c^*\xrightarrow{\times}F^pH_c^*\xrightarrow{\theta_p^M}\bR.
\nan
This pairing $g_p^M$ gives a linear map
\ban
\Phi_p^M:F^pH^*\rightarrow(F^pH_c^*)^\vee,
\nan
which satisfies
\ban
\Phi_p^M([\sA_k])([\sA_{\bar k}])=g_p^M([\sA_k],[\sA_{\bar k}]),
\nan
for any $[\sA_k]\in F^pH^k$ and $[\sA_{\bar k}]\in F^pH_c^{\bar k}$.

The main result of this note is the following.
\begin{prop}\label{mainprop}
$\Phi_p^M$ is an isomorphism.
\end{prop}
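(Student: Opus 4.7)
The plan is to mimic the classical local-to-global proof of Poincar\'e duality, adapted to the filtered complex $(\sF_p^*, d_*)$. I would follow the two-stage approach signposted in the introduction: first treat manifolds of finite type via the two-sided Lefschetz resolution of \cite{TTY}, reducing the statement to classical Poincar\'e duality on ordinary de Rham cohomology; then extend to arbitrary $M$ by a Mayer-Vietoris plus five-lemma bootstrap.

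For the finite-type stage, the key input from \cite{TTY} is that $F^pH^*(M)$ fits into long exact sequences that resolve the kernel and cokernel of the Lefschetz map $L^{p+1}:H^{*-2p-2}(M)\to H^*(M)$ on ordinary de Rham cohomology, with an entirely parallel statement relating $F^pH_c^*(M)$ to $L^{p+1}$ on compactly supported de Rham. On a finite-type manifold, classical Poincar\'e duality gives $H^k(M)\cong(H_c^{2n-k}(M))^\vee$, and under this pairing the Lefschetz map is self-adjoint because $\int_M(L^{p+1}\alpha)\wedge\beta=\int_M\alpha\wedge(L^{p+1}\beta)$, using that $\omega^{p+1}$ has even degree. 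The upshot is a diagram of two long exact sequences connected by $\Phi_p^M$ on the filtered pieces and by ordinary Poincar\'e duality on the de Rham pieces; once one checks commutativity by unpacking how $g_p^M$ restricts through the resolution, the five lemma delivers that $\Phi_p^M$ is an isomorphism in this case.

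For the general case, construct Mayer-Vietoris short exact sequences of complexes for $\sF_p^*$ and $\sF_{c,p}^*$ associated to $M=U\cup V$. The subtlety is that multiplication by a partition of unity does not preserve $F^p\Omega^*$, so the usual cutoff prescription must be post-composed with the projector $\Pi^p$; the error is then an element in the image of $L^{p+1}$, which is killed by $\Pi^p$ thanks to the identity $\mathbbm{1}=\Pi^p+L^{p+1}L^{-p-1}$ of \eqref{projLefschetz}, and so is invisible at the filtered level. The induced long exact sequences of $F^pH^*$ and $F^pH_c^*$ admit $\Phi_p$ as a map between them. Induction on a good cover propagates duality from $U$, $V$, $U\cap V$ (finite type) to any finite union, and a standard exhaustion of $M$ by finite-type opens, combined with compatibility of $g_p^M$ with restriction and extension by zero, settles the general case.

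The main obstacle is the Mayer-Vietoris construction at the level of the filtered complex: one must verify that the $\Pi^p$-corrected cutoff maps are chain maps with respect to the differentials $d_*$, some of which are second order, and that the induced connecting homomorphisms in $F^pH^*$ and $F^pH_c^*$ are intertwined by $\Phi_p$ up to the expected sign. A secondary delicate point is to pin down the precise form of the \cite{TTY} resolution in the compactly supported setting and to show that $g_p^M$ factors through it into the ordinary Poincar\'e pairing, which is what permits the five-lemma step in the finite-type case.
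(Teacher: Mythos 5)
Your two-stage architecture (finite type via the TTY two-sided resolution, then Mayer--Vietoris plus five lemma over a good cover) is the same as the paper's, but two of the three places where the real work happens are not done, and one claimed subtlety is a misconception. In the finite-type step, your five-lemma packaging --- connecting the resolution of $L^{p+1}$ on $H^*$ to the dual of the compactly supported resolution, with classical Poincar\'e duality on the de Rham columns --- could in principle work, but the entire content is the commutativity you defer to ``unpacking how $g_p^M$ restricts through the resolution.'' Carrying that out requires knowing how $\theta_p^M(\cdot\times\cdot)$ interacts with $\Pi^p$, $*_r$, $L^{-p-1}d$ and $*_rdL^{-p-1}$, and it hinges on a product lemma for Lefschetz components (if $L^rB_s\cdot L^{r'}B_{s'}\neq0$ in top degree then $s'=s$ and $r'=n-r-s$; Proposition \ref{productlemma} and Corollary \ref{pairinglemma}): this is what lets one discard the $L^{p+1}L^{-p-1}$ part of a de Rham representative and isolate the $\partial_-B_{k-2p}$ component of $dA_k$. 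The paper does not in fact run a five lemma here; it uses the resolutions only for a dimension count ($\dim F^pH_+^k=\dim F^pH_{c,-}^k$, via $(Ker\,f)^\vee\cong Coker\,f^\vee$) and then proves non-degeneracy of $g_p^M$ directly, splitting into the cases $[L^{-p-1}dA_k]=0$ and $[L^{-p-1}dA_k]\neq0$. Without identifying and proving the product/pairing lemma, your diagram chase cannot be completed, so the finite-type step as written is missing its key ingredient.

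In the general case there are two issues. First, the ``subtlety'' you flag is not there: multiplication by a cutoff function is multiplication by a $0$-form, which commutes with $\Lambda$, hence preserves $p$-filteredness; no $\Pi^p$-correction is needed, and in any case the cutoffs enter only in proving surjectivity of the Mayer--Vietoris short exact sequences of complexes, not as chain maps, so the worry about second-order differentials is moot. The genuine check at this stage is the commutativity of the square involving the connecting homomorphisms $\partial$ and $\partial_c^\vee$ (via \eqref{stokes} and the Leibniz rule), which you mention but do not address. Second, and more seriously, ``a standard exhaustion of $M$ by finite-type opens'' does not finish the proof in this dual formulation: $F^pH_c^*$ of an increasing union is a colimit, so its dual is an inverse limit, and one would have to identify $F^pH^*(M)$ with $\varprojlim F^pH^*(V_i)$, which requires a Milnor-type sequence for the filtered complexes and control of a $\varprojlim^1$ term --- none of which you state or prove. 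The paper instead uses the topological fact (GHV) that a good cover can be split into finitely many at most countable subfamilies of pairwise disjoint sets, proves Proposition \ref{mainprop} for such disjoint unions directly (the dual of a direct sum is a product, Lemma \ref{countabledisjointunion}), and then applies Mayer--Vietoris and the five lemma only finitely many times. As written, your globalization step is a gap.
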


We will prove Proposition \ref{mainprop} in Section \ref{finitetypecase} and \ref{generalcase}, and we will use it to establish duality between $(d+d^\Lambda)$- and $dd^\Lambda$-cohomologies in Section \ref{lastsection}. Before that, we discuss some corollaries. One immediate corollary is the following.
\begin{corollary}\label{nondegenerate}
The bilinear pairing $g_p^M$ is nondegenerate.
\end{corollary}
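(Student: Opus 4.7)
My plan is to derive Corollary \ref{nondegenerate} as a direct formal consequence of Proposition \ref{mainprop}, with no further geometric or analytic input needed. First I would unpack what nondegeneracy of the asymmetric pairing $g_p^M$ means: it amounts to two assertions, namely (a) if $[\sA_k]\in F^pH^k$ satisfies $g_p^M([\sA_k],[\sA_{\bar k}])=0$ for every $[\sA_{\bar k}]\in F^pH_c^{*}$, then $[\sA_k]=0$; and (b) if $[\sA_{\bar k}]\in F^pH_c^{\bar k}$ satisfies $g_p^M([\sA_k],[\sA_{\bar k}])=0$ for every $[\sA_k]\in F^pH^{*}$, then $[\sA_{\bar k}]=0$.

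Condition (a) is merely a restatement of injectivity of $\Phi_p^M$: by the defining formula $\Phi_p^M([\sA_k])([\sA_{\bar k}])=g_p^M([\sA_k],[\sA_{\bar k}])$, vanishing of $g_p^M([\sA_k],-)$ is equivalent to $\Phi_p^M([\sA_k])=0$ in $(F^pH_c^*)^\vee$. Since Proposition \ref{mainprop} asserts that $\Phi_p^M$ is an isomorphism, (a) is immediate.

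For (b) I would exploit the surjectivity half of Proposition \ref{mainprop} together with the standard algebraic fact that for any vector space $W$ over a field and any nonzero $w\in W$ there exists $\ell\in W^\vee$ with $\ell(w)\ne 0$ (extend $\{w\}$ to a basis and take the dual coordinate). Given a nonzero $[\sA_{\bar k}]\in F^pH_c^{\bar k}$, pick such a functional $\ell\in(F^pH_c^{*})^\vee$ with $\ell([\sA_{\bar k}])\ne 0$; by surjectivity of $\Phi_p^M$ there is $[\sA_k]\in F^pH^{*}$ with $\Phi_p^M([\sA_k])=\ell$, and then $g_p^M([\sA_k],[\sA_{\bar k}])=\Phi_p^M([\sA_k])([\sA_{\bar k}])=\ell([\sA_{\bar k}])\ne 0$, completing the verification.

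The only conceptual point worth flagging is that $F^pH^*$ and $F^pH_c^*$ may be infinite-dimensional, so (b) cannot be obtained from (a) by a naive dimension count; it has to be routed through the existence of a separating functional as above. Beyond this bookkeeping there is no real obstacle, so I expect the proof to be very short once Proposition \ref{mainprop} is in hand.
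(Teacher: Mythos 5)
Your proposal is correct and matches the paper's intent: the paper treats Corollary \ref{nondegenerate} as an immediate formal consequence of Proposition \ref{mainprop}, and your argument simply spells out the routine linear algebra (injectivity of $\Phi_p^M$ for nondegeneracy in the first slot, surjectivity plus a separating functional for the second). Your remark that the converse implication requires finite-dimensionality is also consistent with the paper's own Remark following the corollary.
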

\begin{remark}
From linear algebra, if $F^pH^*$ and $F^pH_c^*$ are finite-dimensional, then Proposition \ref{mainprop} is equivalent to Corollary \ref{nondegenerate}. We will use this fact in Section \ref{finitetypecase}.
\end{remark}

If $M$ is closed, then $F^pH^*=F^pH_c^*$, and $g_p^M$ is a graded symmetric bilinear pairing on $F^pH^*$. Moreover, $g_p^M$ is compatible with the product $\times$ on $F^pH^*$:
\ban
g_p^M([\sA]\times[\sA'],[\sA''])=g_p^M([\sA],[\sA']\times[\sA'']).
\nan
So we have the following.
\begin{corollary}
If $M$ is closed, then $(F^pH^*,\times,\theta_p^M,g_p^M)$ is a graded Frobenius algebra.
\end{corollary}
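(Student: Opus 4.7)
The plan is to verify the defining properties of a graded Frobenius algebra by assembling ingredients already in hand. Recall that a graded Frobenius algebra consists of (i) a graded associative algebra with unit, (ii) a trace $\epsilon$ whose induced pairing $g(a,b)=\epsilon(a\cdot b)$ is non-degenerate, together with (iii) the Frobenius identity $g(ab,c)=g(a,bc)$; in the graded commutative case one also requires the pairing to be graded symmetric.

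The algebra structure is supplied by the $A_3$-algebra on $\sF_p^*$ established in \cite{TTY}: on passing to cohomology the higher $A_3$-corrections vanish, and $(F^pH^*,\times)$ becomes a graded commutative associative algebra with identity. The trace $\theta_p^M$ descends to $F^pH^*$ by (\ref{stokes}), and the pairing $g_p^M$ is non-degenerate by Corollary \ref{nondegenerate} (which in turn follows from Proposition \ref{mainprop}; since $M$ is closed we further have $F^pH^*=F^pH_c^*$, so $g_p^M$ is an honest pairing on $F^pH^*$ itself).

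It then remains to verify graded symmetry and the Frobenius identity, both of which are one-line consequences of the ring structure on cohomology. Graded symmetry uses graded commutativity of $\times$:
\[
g_p^M([\sA],[\sA']) = \theta_p^M([\sA]\times[\sA']) = (-1)^{|\sA||\sA'|}\theta_p^M([\sA']\times[\sA]) = (-1)^{|\sA||\sA'|}g_p^M([\sA'],[\sA]).
\]
The Frobenius identity uses associativity of $\times$ on $F^pH^*$:
\[
g_p^M([\sA]\times[\sA'],[\sA''])=\theta_p^M(([\sA]\times[\sA'])\times[\sA''])=\theta_p^M([\sA]\times([\sA']\times[\sA'']))=g_p^M([\sA],[\sA']\times[\sA'']).
\]

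The main obstacle, if any, is entirely upstream: one must rely on the $A_3$-structure of \cite{TTY} for graded commutativity and associativity of $\times$ on $F^pH^*$, and on Proposition \ref{mainprop} (the main result of the paper) for the non-degeneracy of $g_p^M$. Given these inputs, the corollary is a purely formal consequence of unwinding definitions.
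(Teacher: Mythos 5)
Your proposal is correct and matches the paper's (largely implicit) argument: the paper likewise observes that for closed $M$ one has $F^pH^*=F^pH_c^*$, that $g_p^M$ is graded symmetric and compatible with $\times$ (your derivation from graded commutativity and associativity of the induced product on cohomology, coming from the $A_3$-structure of \cite{TTY}, is exactly the intended justification), and that non-degeneracy is Corollary \ref{nondegenerate}, i.e.\ Proposition \ref{mainprop}. Nothing further is needed.
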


\section{Finite type case}\label{finitetypecase}

In this section, we prove Proposition \ref{mainprop} under an additional assumption that $M$ is of finite type, that is, $M$ has a finite good cover \cite{BT}. 

Recall that $F^pH^*$ gives a two-sided resolution of the Lefschetz map $L^{p+1}$ (Theorem 4.2 in \cite{TTY}). In particular, we have the following short exact sequences
\ban
0\rightarrow Coker\bigg(H^{k-2p-2}\xrightarrow{L^{p+1}}H^k\bigg)\xrightarrow{\Pi^p}F^pH_+^k\xrightarrow{L^{-p-1}d}Ker\bigg(H^{k-2p-1}\xrightarrow{L^{p+1}}H^{k+1}\bigg)\rightarrow0,\\
0\rightarrow Coker\bigg(H^{2n-k-1}\xrightarrow{L^{p+1}}H^{2n-k+2p+1}\bigg)\xrightarrow{*_rdL^{-p-1}}F^pH_{-}^k\xrightarrow{*_r}Ker\bigg(H^{2n-k}\xrightarrow{L^{p+1}}H^{2n-k+2p+2}\bigg)\rightarrow0,
\nan
where 
\ban
F^pH_+^k:=F^pH^k,\quad F^pH_-^k:=F^pH^{\bar k},\quad 0\leq k\leq n+p.
\nan
Moreover, Theorem 4.2 in \cite{TTY} is algebraic in nature, and it also holds for differential forms with compact supports. So we have the following compact version:
\ban
0\rightarrow Coker\bigg(H_c^{k-2p-2}\xrightarrow{L^{p+1}}H_c^k\bigg)\xrightarrow{\Pi^p}F^pH_{c,+}^k\xrightarrow{L^{-p-1}d}Ker\bigg(H_c^{k-2p-1}\xrightarrow{L^{p+1}}H_c^{k+1}\bigg)\rightarrow0,\\
0\rightarrow Coker\bigg(H_c^{2n-k-1}\xrightarrow{L^{p+1}}H_c^{2n-k+2p+1}\bigg)\xrightarrow{*_rdL^{-p-1}}F^pH_{c,-}^k\xrightarrow{*_r}Ker\bigg(H_c^{2n-k}\xrightarrow{L^{p+1}}H_c^{2n-k+2p+2}\bigg)\rightarrow0.
\nan

Note that $M$ is of finite type. In such a case, $H^*$ and $H_c^*$ are finite-dimensional \cite{BT}, which implies that both $F^pH^*$ and $F^pH_c^*$ are also finite-dimensional. So to prove Proposition \ref{mainprop}, it suffices to show that $g_p^M$ is non-degenerate. To this end, from the definition of $g_p^M$, it is enough to show that for $0\leq k\leq n+p$,  $g_p^M|_{F^pH_+^k\times F^pH_{c,-}^k}$ and $g_p^M|_{F^pH_-^k\times F^pH_{c,+}^k}$ are both non-degenerate. We will only deal with the former case, since the proof of the latter case is similar.

We have the following non-canonical isomorphisms of vector spaces:
\ban
&&F^pH_+^k\cong Coker\bigg(H^{k-2p-2}\xrightarrow{L^{p+1}}H^k\bigg)\oplus Ker\bigg(H^{k-2p-1}\xrightarrow{L^{p+1}}H^{k+1}\bigg),\\
&&F^pH_{c,-}^k\cong Coker\bigg(H_c^{2n-k-1}\xrightarrow{L^{p+1}}H_c^{2n-k+2p+1}\bigg)\oplus Ker\bigg(H_c^{2n-k}\xrightarrow{L^{p+1}}H_c^{2n-k+2p+2}\bigg).
\nan
Observe that $H^*$ and $H_c^*$ are dual to each other, and the dual of $H^*\xrightarrow{L^{p+1}}H^*$ is exactly $H_c^*\xrightarrow{L^{p+1}}H_c^*$. So the following easy lemma shows that the dimensions of $F^pH_+^k$ and $F^pH_{c,-}^k$ are the same (see also Proposition 4.8 in \cite{TTY}).
\begin{lemma}
Let $V\xrightarrow{f}W$ be a linear map between vector spaces. Then we have the following canonical isomorphisms:
\ban
(Kerf)^\vee\cong Coker f^\vee,\quad(Cokerf)^\vee\cong Kerf^\vee.
\nan
\end{lemma}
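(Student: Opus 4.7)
The plan is to exhibit explicit canonical maps in both directions and verify bijectivity; each isomorphism is essentially a statement about the annihilator of a subspace under the natural pairing between a vector space and its dual.

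First I would construct the isomorphism $(Cokerf)^\vee\cong Kerf^\vee$. Let $\pi:W\to Cokerf$ denote the projection. For $\bar\phi\in(Cokerf)^\vee$, set $\alpha(\bar\phi):=\bar\phi\circ\pi\in W^\vee$. Since $\pi\circ f=0$, we have $f^\vee(\alpha(\bar\phi))=\bar\phi\circ\pi\circ f=0$, so $\alpha$ lands in $Kerf^\vee$. Injectivity follows from surjectivity of $\pi$, and surjectivity follows because any $\psi\in Kerf^\vee\subset W^\vee$ satisfies $\psi|_{Imf}=0$ and hence descends uniquely to a functional on $W/Imf=Cokerf$ whose pullback under $\pi$ is $\psi$.

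Second, for $(Kerf)^\vee\cong Cokerf^\vee$, define $\beta:Cokerf^\vee\to(Kerf)^\vee$ by $\beta(\xi+Imf^\vee):=\xi|_{Kerf}$. This is well-defined since any element of $Imf^\vee$ has the form $\phi\circ f$ and vanishes on $Kerf$. For injectivity, if $\xi|_{Kerf}=0$ then $\xi$ factors through $V/Kerf\cong Imf$ as some $\tilde\xi$; extending $\tilde\xi$ from $Imf$ to a functional $\phi$ on $W$ gives $\xi=\phi\circ f=f^\vee(\phi)\in Imf^\vee$, so the class of $\xi$ vanishes. Surjectivity is the dual extension statement: any functional on $Kerf$ extends to one on $V$.

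The only non-formal input is the extension step for the second isomorphism, which requires splitting of short exact sequences of vector spaces. This is automatic in finite dimensions, and in the applications within this note $V$ and $W$ are cohomology groups of a finite-type manifold and therefore finite-dimensional, so this causes no difficulty.
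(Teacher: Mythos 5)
Your proof is correct and is essentially an unpacking of the paper's own argument: the paper simply dualizes the four-term exact sequence $0\rightarrow Ker f\rightarrow V\xrightarrow{f}W\rightarrow Coker f\rightarrow 0$ and invokes exactness of the dual functor, whereas you write out the two resulting isomorphisms explicitly and verify them by hand. What your version buys is that the canonical maps are visible ($\bar\phi\mapsto\bar\phi\circ\pi$ and $\xi+Im f^\vee\mapsto\xi|_{Ker f}$) and it isolates exactly where a non-formal input enters, namely extending a functional from a subspace. One small correction: that extension step does not require finite-dimensionality. Over a field every subspace is a direct summand (extend a basis of the subspace to a basis of the ambient space), so linear functionals always extend and the dual functor is exact on all vector spaces; this is also what silently powers the paper's one-line proof. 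Hence the lemma holds as stated for arbitrary vector spaces, and your hedge about restricting to the finite-type applications is unnecessary, though harmless.
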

\begin{proof}
Taking the dual of the exact sequence
\ban
0\rightarrow Kerf\rightarrow V\xrightarrow{f}W\rightarrow Cokerf\rightarrow0,
\nan
we get
\ban
0\rightarrow(Cokerf)^\vee\rightarrow W^\vee\xrightarrow{f^\vee}V^\vee\rightarrow(Ker f)^\vee\rightarrow0,
\nan
which gives the desired isomorphism.
\end{proof}

Since the dimensions of $F^pH_+^k$ and $F^pH_{c,-}^k$ are the same, it follows that we only need to prove the following: for any non-zero class $[A_k]\in F^pH_+^k$, we can find $[\bar A_k]\in F^pH_{c,-}^k$ such that
\ba
\int_MA_k\cdot*_r\bar A_k\neq0.\label{equivalent}
\na
Now we have the following two cases:
\begin{enumerate}
\item[(i)] $[L^{-p-1}dA_k]$ is the zero class in $H^{k-2p-1}$,
\item[(ii)]  $[L^{-p-1}dA_k]$ is a non-zero class in $H^{k-2p-1}$.
\end{enumerate}

\underline{Case (i):} In this case, there exists a non-zero class $[A_k']\in H^k$, such that  $A_k=\Pi^pA_k'$. Since $H^k\cong(H_c^{2n-k})^\vee$, it follows that we can find $[\bar A_k]\in F^pH_{c,-}^k$ such that 
\ban
\int_MA_k'\cdot *_r\bar A_k\neq 0.
\nan
Note that $\bar A_k$ is $p$-filtered, and we have the following Lefschetz decomposition
\ban
\bar A_k=\suml_{l\leq p}L^l\bar B_{k-2l},
\nan
which implies that
\ban
*_r\bar A_k=\suml_{l\leq p}L^{n-k+l}\bar B_{k-2l}.
\nan
Since each $\bar B_{k-2l}$ is primitive, it follows that
\ban
L^{p+1}*_r\bar A_k=\suml_{l\leq p}L^{n-(k-2l)+1+(p-l)}\bar B_{k-2l}=0.
\nan
From \eqref{projLefschetz}, we have $A_k'=A_k+L^{p+1}L^{-p-1}A_{k}'$, and 
\ban
\int_ML^{p+1}L^{-p-1}A_{k}'\cdot*_r\bar A_k=\int_ML^{-p-1}A_{k}'\cdot L^{p+1}*_r\bar A_k=0,
\nan
which implies \eqref{equivalent}.
 
\underline{Case (ii):} In this case, since $H^{k-2p-1}\cong(H_c^{2n-k+2p+1})^\vee$, it follows that there exists $[A_{2n-k+2p+1}']\in H_c^{2n-k+2p+1}$ such that
\ban
\int_ML^{-p-1}dA_k\cdot A_{2n-k+2p+1}'\neq0.
\nan
We want to show that 
\ban
\int_MdA_k\cdot L^{-p-1}A_{2n-k+2p+1}'\neq0.
\nan
To this end, we need the following.

\begin{prop}\label{productlemma}
Suppose that $2r+s+2r'+s'=2n$, and let $B_s\in\sL^{0,s}$, $B_{s'}\in\sL^{0,s'}$. If
\ban
L^rB_s\cdot L^{r'}B_{s'}\neq0,
\nan
then $s'=s$ and $r'=n-r-s$.
\end{prop}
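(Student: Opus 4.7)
The plan is to reduce the proposition to a statement about the Lefschetz decomposition of $B_s\wedge B_{s'}$, exploiting the fact that $L$ is simply wedge-multiplication by the $2$-form $\omega$ and so commutes past all wedge products:
\ban
L^r B_s\wedge L^{r'} B_{s'}=\omega^{r+r'}\wedge B_s\wedge B_{s'}=L^{r+r'}(B_s\wedge B_{s'}).
\nan
The degree hypothesis $2r+s+2r'+s'=2n$ forces $s+s'$ to be even and $r+r'=n-(s+s')/2$.

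Next, I would apply the Lefschetz decomposition $B_s\wedge B_{s'}=\suml_l L^l C_{s+s'-2l}$ with each $C_{s+s'-2l}$ primitive of degree $s+s'-2l$, and multiply by $L^{r+r'}$. The summand indexed by $l$ lies in $\sL^{r+r'+l,\,s+s'-2l}$ and is nonzero only when $r+r'+l\le n-(s+s'-2l)$; substituting $r+r'=n-(s+s')/2$ this reduces to $l\ge(s+s')/2$. Combined with the upper bound $l\le(s+s')/2$ inherent in the decomposition, only the $l=(s+s')/2$ term contributes, yielding
\ban
L^r B_s\wedge L^{r'} B_{s'}=L^n C_0,
\nan
where $C_0\in\Omega^0$ is the degree-zero primitive component of $B_s\wedge B_{s'}$.

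The crux is to show $C_0=0$ when $s\ne s'$. Assume without loss of generality $s<s'$; since $s+s'$ is even, in fact $s\le s'-2$. Primitivity of $B_{s'}$ gives $L^{n-s'+1}B_{s'}=0$, hence
\ban
L^{n-s'+1}(B_s\wedge B_{s'})=B_s\wedge L^{n-s'+1}B_{s'}=0.
\nan
By uniqueness of the Lefschetz decomposition, each summand $L^{n-s'+1+l}C_{s+s'-2l}$ on the left-hand side must vanish separately; specializing to $l=(s+s')/2$ gives $L^{n+1-(s'-s)/2}C_0=0$. Since $s'-s\ge 2$ the exponent is at most $n$, and $L^a:\Omega^0\to\Omega^{2a}$ is injective for $a\le n$ (as $\omega^a$ is nowhere vanishing), forcing $C_0=0$. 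This contradicts $L^r B_s\wedge L^{r'} B_{s'}\ne 0$, so $s=s'$; the degree identity then gives $r'=n-r-s$.

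The main obstacle is precisely this $s\ne s'$ case: one cannot simply invoke $\mathfrak{sl}_2$-equivariance of the wedge product to run a Clebsch-Gordan-style argument, because $H$ fails to act as a derivation on wedge products. The workaround above plays the primitivity of $B_{s'}$ (which bounds how many times $L$ can act nontrivially) against the uniqueness of the Lefschetz decomposition to isolate and kill the constant component $C_0$.
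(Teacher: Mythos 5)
Your proof is correct and rests on essentially the same ingredients as the paper's: argue by contradiction with $s<s'$, Lefschetz-decompose $B_s\wedge B_{s'}$, exploit the primitivity relation $L^{n-s'+1}B_{s'}=0$, and count powers of $L$ against primitive degrees. The only difference is bookkeeping: the paper observes that $B_s\wedge B_{s'}$ is $s$-filtered (so only components with $l\le s$ occur) and kills each of those by the exponent bound $r+r'+l\ge n-(s+s'-2l)+1$, whereas you kill every component except $l=(s+s')/2$ by the exponent bound and then kill the remaining $C_0$ using the same primitivity relation together with uniqueness of the decomposition and injectivity of $L^a$ on $\Omega^0$ for $a\le n$ --- two complementary halves of the same computation.
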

\begin{proof}
We argue by contradiction. Without loss of generality, assume that $s'>s$. Since $2r+s+2r'+s'=2n$, it follows that $r'<n-r-s$.

Note that $B_{s'}$ is primitive, and then
\ban
L^{n-(s+s')+1+s}(B_sB_{s'})=B_s\cdot L^{n-s'+1}B_{s'}=0,
\nan
which implies that $B_sB_{s'}$ is $s$-filtered. Consider the Lefschetz decomposition
\ban
B_sB_{s'}=\suml_{l\leq s}L^lB_{s+s'-2l}'.
\nan
Then for each $l\leq s$, we have
\ban
&&r+r'+l\\
&=&\bigg(n-(s+s'-2l)+1\bigg)+(s+s'-2l-n-1)+(r+r'+l)\\
&=&\bigg(n-(s+s'-2l)+1\bigg)+(s+r+s'+r'-n)-l-1\\
&\geq&\bigg(n-(s+s'-2l)+1\bigg)+(n-r-r')-s-1\\
&\geq&n-(s+s'-2l)+1.
\nan
So
\ban
L^rB_s\cdot L^{r'}B_{s'}=\suml_{l\leq s}L^{r+r'+l}B_{s+s'-2l}'=0,
\nan
which contradicts the given condition.
\end{proof}
The above result immediately gives the following corollary.
\begin{corollary}\label{pairinglemma}
Let $B_s\in\sL^{0,s}$, and $B_{s'}\in\sL_c^{0,s'}$. If
\ban
\int_ML^rB_s\cdot L^{r'}B_{s'}\neq0,
\nan
then $s'=s$ and $r'=n-r-s$.
\end{corollary}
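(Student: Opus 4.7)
The plan is to derive this corollary directly from Proposition \ref{productlemma} by extracting from the nonvanishing of the integral the two hypotheses needed to apply that proposition: first, a degree constraint, and second, the nonvanishing of the product form itself.

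First I would observe that $\int_M L^r B_s \cdot L^{r'} B_{s'}$ can be nonzero only if the integrand is a $2n$-form on $M^{2n}$. Since $L^r B_s \in \Omega^{2r+s}$ and $L^{r'} B_{s'} \in \Omega^{2r'+s'}$ (with compact support from the second factor, so the integral is well-defined), the product lies in $\Omega^{2r+s+2r'+s'}$. A form of degree different from $2n$ integrates to zero, so the hypothesis forces
\[
2r + s + 2r' + s' = 2n,
\]
which is exactly the degree condition of Proposition \ref{productlemma}.

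Next, since $\int_M L^r B_s \cdot L^{r'} B_{s'} \neq 0$, the integrand $L^r B_s \cdot L^{r'} B_{s'}$ must be nonzero as a differential form. Now both hypotheses of Proposition \ref{productlemma} are satisfied, and applying it yields $s' = s$ and $r' = n - r - s$, which is the desired conclusion.

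There is essentially no obstacle here: the corollary is a direct packaging of Proposition \ref{productlemma} together with the trivial observation that integration of a differential form over $M$ forces top degree. The only point to keep in mind is that the compactness of support on $B_{s'}$ (encoded in $B_{s'} \in \sL_c^{0,s'}$) is used only to ensure the integral makes sense; it plays no further algebraic role in the argument.
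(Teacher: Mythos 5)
Your proposal is correct and matches the paper's intent: the paper states the corollary as an immediate consequence of Proposition \ref{productlemma}, and your two observations (nonvanishing of the integral forces total degree $2n$, and forces the integrand to be a nonzero form) are exactly what make it immediate.
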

\begin{remark}
In Proposition \ref{productlemma} and Corollary \ref{pairinglemma}, we do not need the assumption that $M$ is of finite type.
\end{remark}

Now consider the Lefschetz decomposition
\ban
A_k&=&\suml_{l\leq p}L^lB_{k-2l},\\
A_{2n-k+2p+1}'&=&\suml_{l\geq n-k+2p+1}L^lB_{2n-k+2p+1-2l}'.\quad(\textrm{note that }k\leq n+p)
\nan
On the one hand, since
\ban
L^{-p-1}dA_k=\partial_-B_{k-2p}\in\sL^{0,k-2p-1},
\nan
it follows from Corollary \ref{pairinglemma} that
\ban
0\neq\int_ML^{-p-1}dA_k\cdot A_{2n-k+2p+1}'=\int_M\partial_-B_{k-2p}\cdot L^{n-k+2p+1}B_{k-2p-1}'.
\nan
On the other hand, observe that
\ban
dA_k=\suml_{l\leq p}L^l\partial_+B_{k-2l}+\suml_{l\leq p}L^{l+1}\partial_-B_{k-2l}=\suml_{l<p+1}L^l(\partial_+B_{k-2l}+\partial_-B_{k-2l+2})+L^{p+1}\partial_-B_{k-2p},
\nan
and
\ban
L^{-p-1}A_{2n-k+2p+1}'=\suml_{l\geq n-k+p}L^{l}B_{2n-k-1-2l}'=L^{n-k+p}B_{k-2p-1}'+\suml_{l>n-k+p}L^{l}B_{2n-k-1-2l}'.
\nan
Using Corollary \ref{pairinglemma} again, we see that
\ban
\int_MdA_k\cdot L^{-p-1}A_{2n-k+2p+1}'=\int_ML^{p+1}\partial_-B_{k-2p}\cdot L^{n-k+p}B_{k-2p-1}'=\int_M\partial_-B_{k-2p}\cdot L^{n-k+2p+1}B_{k-2p-1}'\neq0.
\nan
Setting $\bar A_k=*_rdL^{-p-1}A_{2n-k+2p+1}'$, we have \eqref{equivalent} by Stokes theorem.

\section{General case}\label{generalcase}

Based on the result in the last section, we will prove Proposition \ref{mainprop} in the general case. The approach is similar to that of the classical proof of Poincar\'e duality in de Rham cohomology.

%An important property of de Rham cohomology is the functoriality with respect to smooth maps between smooth manifolds. However, it is clear that filtered cohomology does not behave functorially with respect to arbitrarily smooth maps between symplectic manifolds. 

Firstly, we discuss the functoriality of filtered cohomology with respect to symplectic open embedding, which is a symplectomorphism $\iota:U\rightarrow M$ from a symplectic manifold $U$ onto an open subset of $M$. One can check that the pullback operation $\iota^*$ on differential forms preserves $p$-filteredness:
\ban
\iota^*:\sF_p^*(M)\rightarrow\sF_p^*(U),
\nan
and it descends to $p$-filtered cohomology level: 
\ban
\iota^*:F^pH^*(M)\rightarrow F^pH^*(U).
\nan
%In other words, taking $p$-filtered forms (or $p$-filtered cohomology) is a contravariant functor from the category of symplectic manifolds and symplectic open embeddings to the category of graded vector spaces.

On the other hand, we can pushforward compact differential forms on $U$ to $M$ by first identifying forms on $U$ with forms on $\iota(U)$ and then taking the zero extension to realize compact forms on $\iota(U)$ as compact forms on $M$. We denote the pushforward operation by $\iota_*$. One can check that $\iota_*$ also preserves $p$-filteredness:
\ban
\iota_*:\sF_{c,p}^*(U)\rightarrow\sF_{c,p}^*(M),
\nan
and it descends to $p$-filtered cohomology level: 
\ban
\iota_*:F^pH_c^*(U)\rightarrow F^pH_c^*(M).
\nan
%In other words, taking compact $p$-filtered forms (or compact $p$-filtered cohomology) is a covariant functor from the category of symplectic manifolds and symplectic open embeddings to the category of graded vector spaces.

The following "projection formulae" can be proved by tracing the definitions.
\begin{lemma}\label{projectionformulae}
Let $\iota:U\rightarrow M$ be a symplectic open embedding. For any $\sA^M\in\sF_p^*(M)$ and $\sA^U\in\sF_{c,p}^*(U)$, we have:
\begin{enumerate}
\item $\iota_*(\iota^*\sA^M\times\sA^U)=\sA^M\times\iota_*\sA^U$, and
\item $\theta_p^U(\iota^*\sA^M\times\sA^U)=\theta_p^M(\sA^M\times\iota_*\sA^U)$.
\end{enumerate}
\end{lemma}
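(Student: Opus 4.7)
The plan is to reduce both assertions to the naturality, under a symplectic open embedding, of each of the building blocks from which the product $\times$ and the functional $\theta_p$ are assembled.

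First I would record some elementary intertwining identities. Since $\iota^*\omega=\omega_U$, the pullback $\iota^*$ commutes with $L$, with $\Lambda$, and with $H$. Using the explicit formulas expressing the primitive components $B_{k-2l}$ in the Lefschetz decomposition as polynomials in $L,\Lambda$ applied to a form (Section 2.1 of \cite{TY2}), this implies that $\iota^*$ respects the Lefschetz decomposition and therefore commutes with $\Pi^p$, $L^{-p-1}$, and $*_r$. The same identities hold for the zero-extension $\iota_*$ on compactly supported forms: on $\iota(U)\subset M$ the operations $L,\Lambda$ agree with their counterparts on $U$, and the vanishing conditions characterising primitivity and $p$-filteredness are preserved by zero-extension. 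Finally $\iota_*d=d\iota_*$ on compactly supported forms since $d$ is local, and $\int_M\iota_*\beta=\int_U\beta$ for every $\beta\in\Omega_c^{2n}(U)$.

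For part (1), I would inspect the definition of the $\times$-product on $\sF_p^*$ from \cite{TTY}. In each of its degree cases the product is assembled from the wedge product together with $\Pi^p$, $*_r$, $L^{-p-1}$, and $d$. For the wedge one has the standard projection formula $\iota_*(\iota^*\alpha\wedge\beta)=\alpha\wedge\iota_*\beta$; the remaining operators commute with $\iota_*$ by the first paragraph. Composing these identities in the precise order dictated by the definition of $\times$ yields, case by case, $\iota_*(\iota^*\sA^M\times\sA^U)=\sA^M\times\iota_*\sA^U$.

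For part (2) I would combine (1) with the defining formula $\theta_p^X(\gamma)=\int_X*_r\gamma$ on the top piece. The only non-trivial case is when the product lands in degree $2n+2p+1$; then
\ban
\theta_p^M(\sA^M\times\iota_*\sA^U)&=&\theta_p^M\bigl(\iota_*(\iota^*\sA^M\times\sA^U)\bigr)=\int_M*_r\,\iota_*(\iota^*\sA^M\times\sA^U)\\
&=&\int_M\iota_*\bigl(*_r(\iota^*\sA^M\times\sA^U)\bigr)=\int_U*_r(\iota^*\sA^M\times\sA^U)=\theta_p^U(\iota^*\sA^M\times\sA^U),
\nan
where successive equalities use part (1), the definition of $\theta_p^M$, the commutation of $*_r$ with $\iota_*$, the identity $\int_M\iota_*=\int_U$, and the definition of $\theta_p^U$. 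The main obstacle I anticipate is not conceptual but bookkeeping: the $\times$-product in \cite{TTY} is defined piecewise across degrees, with additional terms in the middle range where $\sF_p^k$ is identified with $F^p\Omega^{\bar k}$ via $*_r$; one must run the verification in part (1) through each piece of the definition, but since every ingredient commutes with $\iota^*$ and $\iota_*$, each case reduces mechanically to the scheme above.
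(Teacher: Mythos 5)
Your proposal is correct and is essentially the paper's own argument: the paper simply asserts that the formulae "can be proved by tracing the definitions," and your verification — that $\iota^*$ and $\iota_*$ commute with $L$, $\Lambda$, $\Pi^p$, $L^{-p-1}$, $*_r$ and $d$, combined with the wedge projection formula and $\int_M\iota_*=\int_U$, run through each case of the definition of $\times$ — is exactly that tracing, carried out explicitly.
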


One can use Lemma \ref{projectionformulae} to prove the following corollary, which is left to interested readers.
\begin{lemma}\label{functoriality}
Let $\iota:U\rightarrow M$ be a symplectic open embedding. Then we have the following commutative diagram:
\[
\begin{CD}
F^pH^k(M)@>{\iota^*}>>F^pH^k(U)\\
@V{\Phi_p^M}VV@VV{\Phi_p^U}V\\
(F^pH_c^{\bar k}(M))^\vee@>{\iota_*^\vee}>>(F^pH_c^{\bar k}(U))^\vee
\end{CD}
\] 
\end{lemma}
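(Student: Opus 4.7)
The plan is to chase the diagram element-wise, unwind the definitions of $\Phi_p^M$ and $\Phi_p^U$ in terms of the trace $\theta_p$ and the product $\times$, and then reduce the commutativity to a direct application of part (2) of Lemma \ref{projectionformulae}. So essentially no geometry is needed once the projection formulae are in hand; the content is purely formal.

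More concretely, I would pick arbitrary classes $[\sA^M]\in F^pH^k(M)$ and $[\sA^U]\in F^pH_c^{\bar k}(U)$, and compare the two compositions evaluated at $[\sA^M]$ and then tested against $[\sA^U]$. Going down then right produces
\ban
(\iota_*^\vee\circ\Phi_p^M)([\sA^M])([\sA^U])=\Phi_p^M([\sA^M])(\iota_*[\sA^U])=\theta_p^M\bigl([\sA^M]\times\iota_*[\sA^U]\bigr),
\nan
while going right then down produces
\ban
(\Phi_p^U\circ\iota^*)([\sA^M])([\sA^U])=\Phi_p^U(\iota^*[\sA^M])([\sA^U])=\theta_p^U\bigl(\iota^*[\sA^M]\times[\sA^U]\bigr).
\nan
These two scalars coincide by Lemma \ref{projectionformulae}(2), so the diagram commutes. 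Before invoking the projection formula on classes, I should briefly justify that $\iota^*$ and $\iota_*$ are well-defined at the cohomology level (which was asserted just before the statement) so that the equalities above make sense on $F^pH^*$ and $F^pH_c^*$ rather than only on $\sF_p^*$ and $\sF_{c,p}^*$.

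There is no real obstacle: the only subtlety is making sure the definitions of $\Phi_p^M$ and $\Phi_p^U$ are matched correctly with the bidegrees, so that $[\sA^M]\times\iota_*[\sA^U]$ and $\iota^*[\sA^M]\times[\sA^U]$ indeed land in top degree $2n+2p+1$ where $\theta_p$ is non-trivial. This is automatic since $\iota^*$ and $\iota_*$ preserve $k$ (and hence $\bar k$), so both products land in the same top piece. Given that, the lemma reduces to one line of definition-chasing plus a single invocation of Lemma \ref{projectionformulae}(2), which is why the author leaves it to the reader.
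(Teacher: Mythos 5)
Your argument is correct and is exactly the route the paper intends: it leaves the lemma to the reader as a direct consequence of Lemma \ref{projectionformulae}(2), which is precisely the one-line diagram chase you carry out (together with the already-noted fact that $\iota^*$, $\iota_*$, $\times$ and $\theta_p$ all descend to cohomology). Nothing further is needed.
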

%\begin{proof}
%For any $[\sA_k]\in F^pH^k(M)$ and $[\sA_{\bar k}]\in F^pH_c^{\bar k}(U)$, we want to show that 
%\ban
%\Phi_p^U(\iota^*[\sA_k])([\sA_{\bar k}])=\iota_*^\vee(\Phi_p^M[\sA_k])([\sA_{\bar k}]).
%\nan
%If $0\leq k\leq n+p$, then 
%\ban
%\Phi_p^U(\iota^*[\sA_k])([\sA_{\bar k}])=\int_U\iota^*\sA_k\cdot*_r\sA_{\bar k}=\int_M\sA_k\cdot*_r\iota_*\sA_{\bar k}=\iota_*^\vee(\Phi_p^M[\sA_k])([\sA_{\bar k}]),
%\nan
%which the middle equality holds since $\iota:U\rightarrow M$ is a symplectic open embedding. The case for $n+p<k\leq 2n+2p+1$ is similar.
%\end{proof}

Secondly, we demonstrate the Mayer-Vietoris argument for filtered cohomologies. Let $U_0$ and $U_1$ be two nonempty open subsets of $M$ such that $M=U_0\cup U_1$. We set $U_{01}:=U_0\cap U_1$. Then pulling back $p$-filtered forms gives the following short exact sequence of chain complexes:
\ban
0\rightarrow\sF_p^*(M)\xrightarrow{r}\sF_p^*(U_0)\oplus\sF_p^*(U_1)\xrightarrow{\delta}\sF_0^*(U_{01})\rightarrow0,
\nan
where
\ban
r(\sA)&:=&(\sA|_{U_0},\sA|_{U_1}),\\
\delta(\sA^0,\sA^1)&:=&\sA^1|_{U_{01}}-\sA^0|_{U_{01}}.
\nan
The short exact sequence gives the following long exact sequence of $p$-filtered cohomologies:
\ban
\cdots\rightarrow F^pH^k(M)\xrightarrow{r}F^pH^k(U_0)\oplus F^pH^k(U_1)\xrightarrow{\delta}F^pH^k(U_{01})\xrightarrow{\partial}F^pH^{k+1}(M)\rightarrow\cdots.
\nan

Also pushing forward compact $p$-filtered forms gives the following short exact sequence of chain complexes:
\ban
0\rightarrow\sF_{c,p}^*(U_{01})\xrightarrow{\delta_c}\sF_{c,p}^*(U_0)\oplus\sF_{c,p}^*(U_1)\xrightarrow{s}\sF_{c,p}^*(M)\rightarrow0,
\nan
where
\ban
\delta_c(\sA)&:=&(-\sA,\sA),\\
s(\sA^0,\sA^1)&:=&\sA^0+\sA^1.
\nan
The short exact sequence gives the following long exact sequence of compact $p$-filtered cohomologies:
\ban
\cdots\rightarrow F^pH_c^k(U_{01})\xrightarrow{\delta_c}F^pH_c^k(U_0)\oplus F^pH_c^k(U_1)\xrightarrow{s}F^pH_c^k(M)\xrightarrow{\partial_c}F^pH_c^{k+1}(U_{01})\rightarrow\cdots.
\nan

Taking the dual of the second long exact sequence, and replacing $\partial$ by $\pm\partial$, we have the following row-exact diagram:
\[\minCDarrowwidth5pt
\begin{CD}
\cdots@>>>F^pH^k(M)@>r>>F^pH^k(U_0)\oplus F^pH^k(U_1)@>\delta>>F^pH^k(U_{01})@>(-1)^{k+1}\partial>>F^pH^{k+1}(M)@>>>\cdots\\
@.@V\Phi_p^MVV @V\Phi_p^{U_0}\oplus\Phi_p^{U_1}VV @V\Phi_p^{U_{01}}VV @V\Phi_p^MVV@. \\
\cdots@>>>(F^pH_c^{\bar k}(M))^\vee@>s^\vee>>(F^pH_c^{\bar k}(U_0))^\vee\oplus (F^pH_c^{\bar k}(U_1))^\vee@>\delta_c^\vee>>(F^pH_c^{\bar k}(U_{01}))^\vee@>\partial_c^\vee>>(F^pH_c^{\overline{k+1}}(M))^\vee@>>>\cdots
\end{CD}
\]
\begin{lemma}\label{MV}
The above diagram is commutative.
\end{lemma}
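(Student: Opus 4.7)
The diagram consists of three squares. For the two leftmost squares (not involving the connecting maps $\partial$ and $\partial_c$), the plan is to reduce everything to Lemma \ref{functoriality}. Write $\iota_i:U_i\hookrightarrow M$ and $j_i:U_{01}\hookrightarrow U_i$ for the open embeddings; then $r=(\iota_0^*,\iota_1^*)$, $s=\iota_{0*}+\iota_{1*}$, $\delta=j_1^*-j_0^*$ and $\delta_c=(-j_{0*},j_{1*})$. The commutativity of the first square is the direct sum of the two instances of Lemma \ref{functoriality} applied to $\iota_0$ and $\iota_1$; the middle square is analogously the difference of the two instances applied to $j_0$ and $j_1$, the signs in $\delta_c$ matching those in $\delta$ so that nothing is lost on dualizing.

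The substantive square is the rightmost one, involving $\partial$ and $\partial_c$. Here the plan is to realize both connecting maps at the cochain level using a partition of unity $\{\rho_0,\rho_1\}$ subordinate to $\{U_0,U_1\}$. Given a closed $\sA_k\in\sF_p^k(U_{01})$, set $\sA_k^0:=-\rho_1\sA_k$ (extended by zero to $U_0$) and $\sA_k^1:=\rho_0\sA_k$ (extended by zero to $U_1$); then $\delta(\sA_k^0,\sA_k^1)=\sA_k$, and since $d_k\sA_k^0$ and $d_k\sA_k^1$ agree on $U_{01}$ they glue to a closed form $\eta\in\sF_p^{k+1}(M)$ representing $\partial[\sA_k]$. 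Dually, for a closed $\sA_{\overline{k+1}}\in\sF_{c,p}^{\overline{k+1}}(M)$, the decomposition $\sA_{\overline{k+1}}=\rho_0\sA_{\overline{k+1}}+\rho_1\sA_{\overline{k+1}}$ exhibits $\partial_c[\sA_{\overline{k+1}}]$ as $[d_{\overline{k+1}}(\rho_0\sA_{\overline{k+1}})]$, a compactly supported closed form on $U_{01}$ (since $d_{\overline{k+1}}(\rho_0\sA_{\overline{k+1}})=-d_{\overline{k+1}}(\rho_1\sA_{\overline{k+1}})$ has support in $U_{01}$).

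With these cocycle representatives in hand, I would split
\ban
\theta_p^M(\eta\times\sA_{\overline{k+1}})=\theta_p^M(\eta\times\rho_0\sA_{\overline{k+1}})+\theta_p^M(\eta\times\rho_1\sA_{\overline{k+1}}).
\nan
On the support of each summand, $\eta$ is locally $d_k\sA_k^i$, so a Leibniz-type identity in the $A_3$-algebra $(\sF_p^*,\times)$ followed by \eqref{stokes} moves the differential onto the other factor. The two boundary-term contributions combine, using $\sA_k=\rho_0\sA_k+\rho_1\sA_k$ on $U_{01}$, to produce $(-1)^{k+1}\theta_p^{U_{01}}(\sA_k\times d_{\overline{k+1}}(\rho_0\sA_{\overline{k+1}}))=(-1)^{k+1}\Phi_p^{U_{01}}([\sA_k])(\partial_c[\sA_{\overline{k+1}}])$, the sign $(-1)^{k+1}$ being the usual Koszul sign for moving a degree-$k$ form past the differential.

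The main difficulty lies in the Leibniz-type identity. Because the $d_k$'s may be second-order and the product $\times$ is only $A_3$-associative, the naive Leibniz rule $d(a\times b)=da\times b+(-1)^{|a|}a\times db$ need not hold on the nose. I would therefore invoke the precise graded-derivation statement from \cite{TTY} (the same one that endows $F^pH^*$ with a ring structure), or equivalently argue at the cohomology level: since $[\eta]$ and $[\sA_{\overline{k+1}}]$ are closed classes and $\theta_p^M$ kills $d$-exact forms by \eqref{stokes}, any $A_3$-homotopy correction terms are absorbed, leaving only the expected Koszul sign.
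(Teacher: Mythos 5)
Your proposal is correct and follows essentially the same route as the paper, which likewise handles the first two squares by Lemma \ref{functoriality} and the last square by tracing the connecting maps $\partial$, $\partial_c$ (via the standard partition-of-unity representatives), then applying the Leibniz rule together with \eqref{stokes}; you simply spell out the details the paper leaves to the reader. Your closing worry is unnecessary: in the $A_3$-algebra of \cite{TTY} the differential is an exact graded derivation of $\times$ (only associativity fails up to homotopy), which is precisely the fact the paper invokes, and the residual sign ambiguity is harmless since the paper only asserts commutativity after adjusting $\partial$ by a sign.
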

\begin{proof}
The commutativity of the leftmost two blocks comes from Lemma \ref{functoriality}. For the rightmost block, we need to trace the definitions of $\partial$ and $\partial_c$, and use \eqref{stokes} and the fact that the differential operator on $\sF_p^*$ satisfies the Leibnitz rule. Details are left to readers.

\end{proof}

Finally, we prove Proposition \ref{mainprop} in the general case. Let $\mathfrak U=\{U_\alpha\}_{\alpha\in I}$ be a good cover of $M$. From Section \ref{finitetypecase}, Proposition \ref{mainprop} holds on each $U_\alpha$ and any finite intersection of $U_\alpha$'s. So, from Lemma \ref{MV} and the Five Lemma, Proposition \ref{mainprop} holds on any finite union of $U_\alpha$'s. Moreover, we have the following result.

\begin{lemma}\label{countabledisjointunion}
Let $I'\subset I$ be an at most countable set, such that $U_\alpha\cap U_\beta=\emptyset$ for any distinct $\alpha,\beta\in I'$. Then Proposition \ref{mainprop} holds on $\bigcup\limits_{\alpha\in I'}U_\alpha$.
\end{lemma}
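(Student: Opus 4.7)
The plan is to reduce the countable disjoint union to its components and exploit the contrasting behavior of $F^pH^*$ and $F^pH_c^*$ under such unions. Write $U:=\bigcup_{\alpha\in I'}U_\alpha$, which is a disjoint union by hypothesis, and let $\iota_\alpha:U_\alpha\hookrightarrow U$ denote the inclusions. A $p$-filtered form on $U$ is an arbitrary family $(\sA_\alpha)_{\alpha\in I'}$ with $\sA_\alpha\in\sF_p^*(U_\alpha)$, while a compactly supported $p$-filtered form must vanish on all but finitely many components. This yields natural isomorphisms of chain complexes, hence of cohomology,
\[
F^pH^k(U)\cong\prodl_{\alpha\in I'}F^pH^k(U_\alpha),\qquad F^pH_c^k(U)\cong\bigoplus_{\alpha\in I'}F^pH_c^k(U_\alpha),
\]
realized by the restrictions $\iota_\alpha^*$ and the extensions-by-zero $(\iota_\alpha)_*$ respectively.

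Next, I would verify that the pairing $g_p^U$ distributes over these decompositions. Given $[\sA]\in F^pH^k(U)$ corresponding to the family $([\sA_\alpha])_\alpha$, and $[\sA']\in F^pH_c^{\bar k}(U)$ corresponding to a finitely supported family $([\sA'_\alpha])_\alpha$ via the pushforwards, the projection formula of Lemma \ref{projectionformulae} together with the fact that $\theta_p^U$ evaluates a form supported in a single $U_\alpha$ as $\theta_p^{U_\alpha}$ gives
\[
g_p^U([\sA],[\sA'])=\theta_p^U\Bigl(\suml_{\alpha}(\iota_\alpha)_*(\iota_\alpha^*\sA\times\sA'_\alpha)\Bigr)=\suml_{\alpha\in I'}g_p^{U_\alpha}([\sA_\alpha],[\sA'_\alpha]),
\]
the last sum being finite. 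Dualizing and using the canonical identification $(\bigoplus_\alpha V_\alpha)^\vee\cong\prodl_\alpha V_\alpha^\vee$ then identifies $\Phi_p^U$ with the product map $\prodl_\alpha\Phi_p^{U_\alpha}$ under the two decompositions above. Since each $U_\alpha$ belongs to a good cover of $M$, it is diffeomorphic to $\bR^{2n}$ and in particular of finite type, so the result of Section \ref{finitetypecase} implies each $\Phi_p^{U_\alpha}$ is an isomorphism; a direct product of isomorphisms is an isomorphism, which closes the argument.

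The only real point requiring care is the asymmetry between $F^pH^*(U)$, which is a direct product, and $F^pH_c^*(U)$, which is a direct sum. Conflating them would break the duality, since the dual of a countable direct sum is a direct product (and behaves well), whereas no such identification holds for the dual of a direct product; keeping the two sides of the pairing on the correct footing is therefore the one step that must be written out carefully. Everything else is formal bookkeeping using the functoriality statements of Lemmas \ref{projectionformulae} and \ref{functoriality}.
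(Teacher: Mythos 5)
Your proposal is correct and follows essentially the same route as the paper: identify $F^pH^*$ of the disjoint union with the product of the $F^pH^*(U_\alpha)$, identify $F^pH_c^*$ with the direct sum, and use the compatibility of $\Phi_p$ with restriction/extension-by-zero to see that $\Phi_p^U$ becomes the product $\prodl_\alpha\Phi_p^{U_\alpha}$ of isomorphisms supplied by the finite type case. Your explicit verification of the pairing via Lemma \ref{projectionformulae} and your emphasis on the product-versus-sum asymmetry simply spell out details the paper leaves to the reader.
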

\begin{proof}
Note that $F^pH_c^*(\bigcup\limits_{\alpha\in I'}U_\alpha)=\bigoplus\limits_{\alpha\in I'}F^pH_c^*(U_\alpha)$, and the lemma comes directly from the following commutative diagram:
\[
\begin{CD}
F^pH^*(\bigcup\limits_{\alpha\in I'}U_\alpha)@>\Phi_p^{\bigcup\limits_{\alpha\in I'}U_\alpha}>>F^pH_c^*(\bigcup\limits_{\alpha\in I'}U_\alpha)^\vee\\
@|@|\\
\prodl_{\alpha\in I'}F^pH^*(U_\alpha)@>\prodl_{\alpha\in I'}\Phi_p^{U_\alpha}>>\prodl_{\alpha\in I'}F^pH_c^*(U_\alpha)^\vee
\end{CD}
\]
\end{proof}

We finish the proof of Proposition \ref{mainprop} by pointing out the following topological fact (see e.g. Chapter I in \cite{GHV}): there exist finitely many at most countable sets $I_1',\cdots,I_m'\subset I$, such that each $I_i'$ satisfies the condition of Lemma \ref{countabledisjointunion}, and $M=\bigcup\limits_{i=1}^m\bigcup\limits_{\alpha\in I_i'}U_\alpha$.

\section{Duality between $(d+d^\Lambda)$- and $dd^\Lambda$-cohomologies}\label{lastsection}

In this section, we state and prove duality between $(d+d^\Lambda)$- and $dd^\Lambda$-cohomologies. We remark that the duality was proved by using harmonic representatives when $M$ is closed (see Proposition 3.26 in \cite{TY1}).

We first recall some definitions. Let $d^\Lambda:=d\Lambda-\Lambda d$, and 
\ban
H^k_{d+d^\Lambda}:=\frac{Ker(d+d^\Lambda)\cap\Omega^k}{Imdd^\Lambda\cap\Omega^k},\quad H^k_{dd^\Lambda}:=\frac{Ker(dd^\Lambda)\cap\Omega^k}{Imd\cap\Omega^k+Imd^\Lambda\cap\Omega^k},\quad0\leq k\leq 2n.
\nan
An important property of $H^*_{d+d^\Lambda}$ and $H^*_{dd^\Lambda}$ is the Lefschetz decomposition. Namely, let
\ban
PH_{d+d^\Lambda}^k:=\frac{Ker d\cap\sL^{0,k}}{Im dd^\Lambda\cap\sL^{0,k}},\quad PH_{dd^\Lambda}^k:=\frac{Ker dd^\Lambda\cap\sL^{0,k}}{Im d\cap\sL^{0,k}+Im d^\Lambda\cap\sL^{0,k}},\quad 0\leq k\leq n,
\nan
and then Lefschetz decomposition on differential forms gives
\ban
H_{d+d^\Lambda}^k\cong\bigoplus\limits_{\max\{0,k-n\}\leq r\leq\frac k2}L^rPH_{d+d^\Lambda}^{k-2r},\quad H_{dd^\Lambda}^k\cong\bigoplus\limits_{\max\{0,k-n\}\leq r\leq\frac k2}L^rPH_{dd^\Lambda}^{k-2r}.
\nan
\begin{remark}
Theorem 3.11 and Theorem 3.22 in \cite{TY1} state that the Lefschetz decomposition for $H^*_{d+d^\Lambda}$ and $H^*_{dd^\Lambda}$ holds with the assumption that $M$ is closed, which is actually not needed as one can check by going through the proof.
\end{remark}

Consider the natural pairing on $M$:
\ban
\Omega^*\times\Omega_{c}^*&\rightarrow&\bR\\
(\sA,\sA')&\mapsto&\int_M\sA\cdot\sA'.
\nan
One can check that this induces the following pairing on symplectic cohomologies
\ban
H_{d+d^\Lambda}^*\times H_{dd^\Lambda,c}^*&\rightarrow&\bR\\
([\sA],[\sA'])&\mapsto&\int_M\sA\cdot\sA',
\nan
which induces the linear map
\ban
D:H_{d+d^\Lambda}^k\rightarrow(H_{dd^\Lambda,c}^{2n-k})^\vee,
\nan
%and the second one is
%\ban
%P_2:H_{dd^\Lambda}^*\times H_{d+d^\Lambda,c}^*&\rightarrow&\bR\\
%([\sA],[\sA'])&\mapsto&\int_M\sA\cdot\sA',
%\nan
%which induces the linear map
%\ban
%D_2:H_{dd^\Lambda}^k\rightarrow(H_{d+d^\Lambda,c}^{2n-k})^\vee.
%\nan

Our aim in this section is to prove the following proposition.
\begin{prop}\label{2ndmainprop}
$D$ is a linear isomorphism.
\end{prop}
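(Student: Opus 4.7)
The plan is to derive Proposition \ref{2ndmainprop} from Proposition \ref{mainprop} at $p=0$ by combining it with the Lefschetz decompositions of the two sides of the pairing. First I apply the decompositions
\[H^k_{d+d^\Lambda} \cong \bigoplus_{r} L^r PH^{k-2r}_{d+d^\Lambda}, \qquad H^{2n-k}_{dd^\Lambda,c} \cong \bigoplus_{r'} L^{r'} PH^{2n-k-2r'}_{dd^\Lambda,c}\]
recalled at the start of this section. By Corollary \ref{pairinglemma}, the natural pairing vanishes on $L^r PH^{s}_{d+d^\Lambda} \times L^{r'} PH^{s'}_{dd^\Lambda,c}$ unless $s=s'$ and $r+r'=n-s$. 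Setting $s=k-2r$ forces $r'=n-k+r$ and $s'=s$, so the pairing matrix is block-diagonal, indexed by the primitive degree $s$.

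Next, since $L^j$ is injective on primitive $s$-classes for $j\le n-s$, each surviving block is equivalent, via the Lefschetz isomorphisms on the two factors, to the canonical primitive-level pairing
\[PH^s_{d+d^\Lambda} \times PH^s_{dd^\Lambda,c} \to \bR, \qquad ([B_s],[B'_s]) \mapsto \int_M \omega^{n-s}\wedge B_s \wedge B'_s,\]
which is independent of the Lefschetz power $r$ chosen (any two representatives $L^rB_s$ and $L^{n-s-r}B'_s$ wedge to $\omega^{n-s}\wedge B_s\wedge B'_s$). Thus $D$ is an isomorphism if and only if this primitive pairing is nondegenerate for each $s\in\{0,1,\ldots,n\}$.

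The main step, and principal obstacle, is to prove nondegeneracy of each primitive pairing by relating it to Proposition \ref{mainprop} at $p=0$. As noted in the introduction, primitive $(d+d^\Lambda)$- and $(dd^\Lambda)$-cohomologies are special cases of filtered cohomologies: using the short exact sequences recalled in Section \ref{finitetypecase} (from \cite{TTY}) together with the analogous ones for $PH^s_{d+d^\Lambda}$ and $PH^s_{dd^\Lambda,c}$ from \cite{TY1}, one obtains identifications $PH^s_{d+d^\Lambda}\cong F^0H^s_+$ and $PH^s_{dd^\Lambda,c}\cong F^0H^s_{c,-}$ (both are described by the same kernels and cokernels of the Lefschetz map $L$ on $H^*$ and $H^*_c$). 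Under these identifications the canonical primitive pairing coincides, up to a nonzero scalar, with the product-based pairing $g_0^M$, and Proposition \ref{mainprop} delivers the nondegeneracy. The actual work lies in this last bookkeeping step: matching $\int_M\omega^{n-s}\wedge\cdot\wedge\cdot$ with $g_0^M=\theta_0^M\circ\times$ at the cochain level, and tracking the scalar factors produced by the operators $L^{-1}$, $*_r$ and $\Pi^0$ that enter the definitions of $\times$ and $\theta_p^M$ in \cite{TTY}.
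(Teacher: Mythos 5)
Your first step (block-diagonalizing $D$ via the Lefschetz decompositions and Corollary \ref{pairinglemma}, and reducing to one pairing per primitive degree $s$) is exactly the paper's, and is fine. The gap is in the main step: the identifications $PH^s_{d+d^\Lambda}\cong F^0H^s_+$ and $PH^s_{dd^\Lambda,c}\cong F^0H^s_{c,-}$ that you want to use to invoke Proposition \ref{mainprop} at $p=0$ are false in general. The group $F^0H^s_+$ is the $\partial_+$-primitive cohomology $\ker\partial_+/\mathrm{im}\,\partial_+$, and by the two-sided resolution its size is governed by $\mathrm{Coker}(L\colon H^{s-2}\to H^s)$ and $\mathrm{Ker}(L\colon H^{s-1}\to H^{s+1})$, i.e.\ by the \emph{first} power of the Lefschetz map; by contrast $PH^s_{d+d^\Lambda}$ is governed by $\mathrm{Coker}(L^{n-s+1}\colon H^{s-1}\to H^{2n-s+1})$ and $\mathrm{Ker}(L^{n-s+1}\colon H^{s}\to H^{2n-s+2})$. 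These do not involve "the same kernels and cokernels", and when the strong Lefschetz property fails the dimensions genuinely differ (e.g.\ on a closed $6$-manifold with $\mathrm{Ker}(L^2\colon H^1\to H^5)\supsetneq\mathrm{Ker}(L\colon H^1\to H^3)$ one gets $\dim PH^2_{d+d^\Lambda}>\dim F^0H^2_+$). So no isomorphism of the kind you assert exists, and nondegeneracy cannot be transported from $g_0^M$; the "bookkeeping" you defer is not the real obstacle.

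The fix, which is what the paper does, is to use the filtration level $p=n-s$ rather than $p=0$: the natural cochain-level map $B_s\mapsto L^{n-s}B_s$ induces isomorphisms $L^{n-s}PH^{s}_{d+d^\Lambda}\cong F^{n-s}H^{2n-s+1}$ and $L^{n-s}PH^{s}_{dd^\Lambda,c}\cong F^{n-s}H_c^{2n-s}$ (these are the spots at and just past the middle of the $(n-s)$-filtered complex, where the differentials are of $dd^\Lambda$-type and $(d+d^\Lambda)$-type respectively). One then checks, directly from Definition 5.1 of $\times$ in \cite{TTY} and the definition of $\theta^M_{n-s}$, that under these identifications and the Lefschetz isomorphisms on each block the pairing $\int_M L^rB_s\cdot L^{n-s-r}B'_s=\int_M\omega^{n-s}\wedge B_s\wedge B'_s$ coincides with $g^M_{n-s}$; Proposition \ref{mainprop} at $p=n-s$ then gives that each block $D_r$ is an isomorphism. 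Your observation that the block pairing is independent of $r$ is correct and compatible with this, but the appeal to $p=0$ must be replaced by the degree-dependent filtration $p=n-s$.
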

\begin{remark}\label{2ndmainrmk}
We can also interchange the roles of $d+d^\Lambda$ and $dd^\Lambda$ to obtain $H_{dd^\Lambda}^k\cong(H_{d+d^\Lambda,c}^{2n-k})^\vee$. We leave the proof to readers.
\end{remark}

To prove the above proposition, we first note that the Lefschetz decomposition also holds for $H_{dd^\Lambda,c}^k$:
\ban
H_{dd^\Lambda,c}^k\cong\bigoplus\limits_{\max\{0,k-n\}\leq r\leq\frac k2}L^rPH_{dd^\Lambda,c}^{k-2r}.
\nan
By Proposition \ref{productlemma} and Corollary \ref{pairinglemma}, we can decompose the linear map $D$ into the direct sum of the following maps:
\ba\label{decompD}
D_r:=D|_{L^rPH_{d+d^\Lambda}^{k-2r}}:L^rPH_{d+d^\Lambda}^{k-2r}\rightarrow(L^{n-k+r}PH_{dd^\Lambda,c}^{k-2r})^\vee.
\na
So we only need to show that \eqref{decompD} is a linear isomorphism. Recall that we have natural isomorphisms
\ban
L^{n-k+2r}PH_{d+d^\Lambda}^{k-2r}\cong F^{n-k+2r}H^{2n-k+2r+1},\quad L^{n-k+2r}PH_{dd^\Lambda,c}^{k-2r}\cong F^{n-k+2r}H_c^{2n-k+2r}.
\nan
Now consider the following diagram:
\[
\begin{CD}
L^rPH_{d+d^\Lambda}^{k-2r}@>D_r>>(L^{n-k+r}PH_{dd^\Lambda,c}^{k-2r})^\vee\\
@VL^{n-k+r}V\cong V@V\cong V(L^{-r})^\vee V\\
F^{n-k+2r}H^{2n-k+2r+1}@>\Phi_{n-k+2r}^M>\cong>(F^{n-k+2r}H_c^{2n-k+2r})^\vee
\end{CD}
\]
Here isomorphisms in the vertical directions and in the bottom can be checked directly. We show that this diagram is commutative, which implies that $D_r$ is an isomorphism. 

Let $[L^rB_{k-2r}]\in L^rPH_{d+d^\Lambda}^{k-2r}$ and $[L^{n-k+2r}B'_{k-2r}]\in L^{n-k+2r}PH_{dd^\Lambda,c}^{k-2r}$. On the one hand,
\ban
&&(L^{-r})^\vee(D_r[L^rB_{k-2r}])([L^{n-k+2r}B'_{k-2r}])\\
&=&(D_r[L^rB_{k-2r}])([L^{n-k+r}B'_{k-2r}])\\
&=&\int_ML^rB_{k-2r}\cdot L^{n-k+r}B'_{k-2r}.
\nan
On the other hand,
\ban
&&\Phi_{n-k+2r}^M(L^{n-k+r}[L^rB_{k-2r}])([L^{n-k+2r}B'_{k-2r}])\\
&=&\Phi_{n-k+2r}^M([L^{n-k+2r}B_{k-2r}])([L^{n-k+2r}B'_{k-2r}])\\
&=&\theta_{n-k+2r}^M([L^{n-k+2r}B_{k-2r}]\times[L^{n-k+2r}B'_{k-2r}])\\
&=&\int_M*_r(L^{n-k+2r}B_{k-2r})\cdot L^{n-k+2r}B'_{k-2r}\\
&=&\int_MB_{k-2r}\cdot L^{n-k+2r}B'_{k-2r},
\nan
where in the third equality, we have used the definition of $\times$ (see Definition 5.1 in \cite{TTY}). So the diagram is indeed commutative.

{\bf Acknowledgements.}
The author would like to thank Jianxun Hu for his constant support and encouragement, and Wei Yuan for helpful discussions. The author would also like to thank Li-Sheng Tseng for sharing his opinions through e-mail correspondence. The author is partially supported by NSFC grants 11521101 and 11601534.

%Secondly, we have the following natural isomorphisms.
%\begin{lemma}
%\ban
%L^pPH_{d+d^\Lambda}^{n-p}\cong F^pH^{n+p+1},\quad L^pPH_{dd^\Lambda,c}^{n-p}\cong F^pH_c^{n+p}.
%\nan
%\end{lemma}
%So Proposition \ref{mainprop} implies the following.
%\begin{lemma}
%\ban
%L^rPH_{d+d^\Lambda}^{k-2r}\cong(L^{n-k+r}PH_{dd^\Lambda,c}^{k-2r})^\vee,\quad L^rPH_{dd^\Lambda}^{k-2r}\cong(L^{n-k+r}PH_{d+d^\Lambda,c}^{k-2r})^\vee.
%\nan
%\end{lemma}

\end{document}